\newtheorem{thm}{Theorem}[section]
\newtheorem{claim}{Claim}
\newtheorem{assm}{Assumption}
\newtheorem{obs}[thm]{Observation}
\newtheorem{cor}[thm]{Corollary}
\theoremstyle{definition}
\newtheorem{defn}[thm]{Definition}
\newtheorem{exmpl}[thm]{Example}
\theoremstyle{remark}
\numberwithin{equation}{section}
\begin{document}
\date{}

\title{Decomposition of bi-colored square arrays into balanced diagonals}

\author{
Dani Kotlar and Ran Ziv\\
\small Department of Computer Science, Tel-Hai College, Upper Galilee, Israel\\
}
\maketitle



\begin{abstract}
Given an $n\times n$ array $M$ ($n\ge 7$), where each cell is colored in one of two colors, we give a necessary and sufficient condition for the existence of a partition of $M$ into $n$ diagonals, each containing at least one cell of each color. As a consequence, it follows that if each color appears in at least $2n-1$ cells, then such a partition exists. The proof uses results on completion of partial Latin squares.
\end{abstract}
\section{Introduction}
Let $M$ be a $t\times n$ array with $t\le n$. A \emph{diagonal} in $M$ is a subset of $t$ cells of $M$ such that no two cells are in the same row or in the same column. For a natural number $k$, such that $0<k\le n$, a $k$-\emph{coloring} of $M$ is an assignment of a color from a given set of $k$ colors to each cell of $M$. Given a $k$-coloring of $M$, an $l$-\emph{transversal} ($l\le k$) is a diagonal of $M$ in which at least $l$ distinct colors are represented. We call a diagonal $D$ in a $k$-colored array $M$ \emph{balanced} if all $k$ colors appear in $D$.

A known conjecture of Stein \cite{stein75} asserts that for any $n$-coloring of an $n\times n$ array $M$, where each color appears in $n$ cells, there exists an $(n-1)$-transversal. Stein's conjecture generalizes an earlier conjecture of Ryser and Brualdi \cite{BruRys, Ryser67} which state that such a transversal exists for any $n$-coloring in which all colors in each row and each column are distinct.

A problem related to the Ryser-Brualdi-Stein Conjectures is the search for conditions allowing a decomposition of a $k$-colored $t\times n$ array into $n$ disjoint $t$-transversals. For some conjectures and asymptotic results on the subject see \cite{alon1997degree, Alon1995covering, haggkvist2008orthogonal, Hil94, pip89}.

In this paper we give a necessary and sufficient condition for a 2-colored $n\times n$ arrays to be partitioned into $n$ disjoint balanced diagonals.

\begin{defn}
We call a subset $A$ of cells in an $n\times n$ array \emph{improper} if there exists $i,j\in [n]$ such that each cell in $A$ lies either in row $i$ or in column $j$ but not in both. Otherwise, a set is called \emph{proper}.
\end{defn}

Figure~\ref{fig1} illustrates an improper set (marked with $x$'s).

\begin{figure}[h!]
\begin{center}
\includegraphics[scale=0.3]{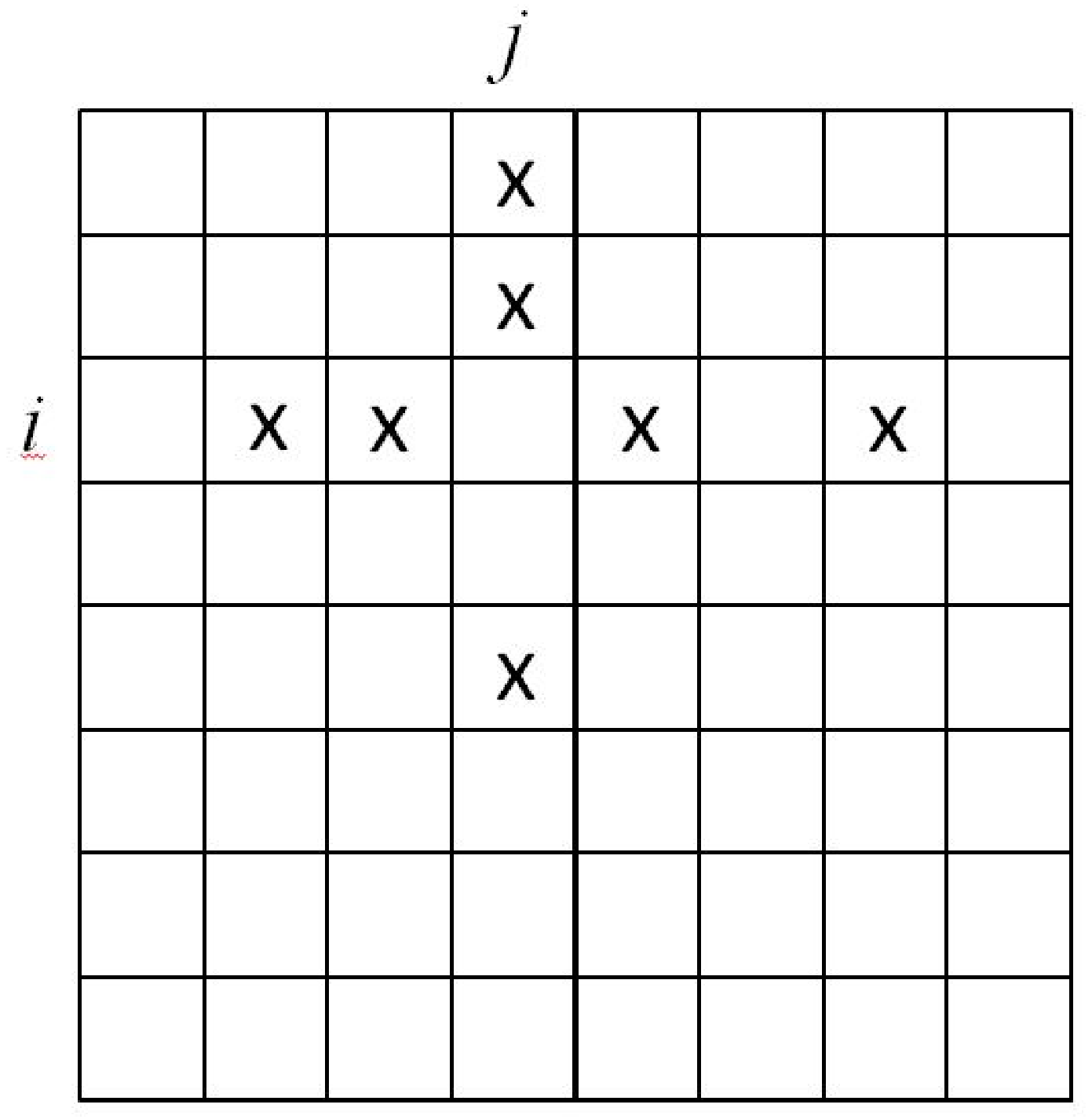}
\end{center}
\caption{}
\label{fig1}
\end{figure}

Our main result is the following theorem:

\begin{thm}\label{main:thm}
Suppose $n\ge7$ and let $M=(m_{ij})_{i,j=1}^n$ be an $n\times n$ array where each cell is colored red or blue. Then $M$ can be partitioned into $n$ balanced diagonals if and only if for each color there is a proper set of $n$ cells colored with it.
\end{thm}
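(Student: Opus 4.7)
For necessity, given a partition of $M$ into $n$ balanced diagonals $D_1,\ldots,D_n$, pick one red cell $r_k$ from each $D_k$. Since each $D_k$ is a transversal, the $r_k$ lie in $n$ pairwise distinct rows and $n$ pairwise distinct columns, so any cross $C_{ij}=(\mathrm{row}\,i\cup\mathrm{col}\,j)\setminus\{(i,j)\}$ meets $\{r_1,\ldots,r_n\}$ in at most two cells (one in row $i$, one in column $j$)---strictly less than $n\ge 7$. Hence $\{r_1,\ldots,r_n\}$ is a proper set of $n$ red cells, and the argument for blue is symmetric.

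For sufficiency, I would identify a partition of $M$ into $n$ diagonals $D_1,\ldots,D_n$ with a Latin square $L$ on $[n]$ via $L(i,j)=k$ iff $(i,j)\in D_k$; in these terms the balance condition becomes: for each $k\in[n]$, the preimage $L^{-1}(k)$ contains both a red and a blue cell. The plan is then: (a) select a proper set $R$ of $n$ red cells and a proper set $B$ of $n$ blue cells; (b) find bijective labelings $\phi_R\colon R\to[n]$ and $\phi_B\colon B\to[n]$ such that $P:=\phi_R\cup\phi_B$ is a partial Latin square on $M$, meaning no row or column contains two cells of $R\cup B$ carrying a common label; (c) extend $P$ to a full Latin square $L$ using a completion theorem for partial Latin squares.

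Step (b) reduces to a perfect-matching problem in the bipartite graph on $R\cup B$ whose edges are row/column-disjoint pairs: one must find a bijection $\pi\colon R\to B$ (the pairing of red and blue cells receiving the same label) so that no pair $(r,\pi(r))$ shares a row or column. I expect Hall's condition here to follow from the properness of $R$ and $B$, since a Hall violator on a set $S\subseteq R$ would confine the defect blue cells to the intersection of the ``crosses'' around elements of $S$, an intersection that shrinks rapidly with $|S|$ and ultimately forces $B$ into a single cross, contradicting properness. Step (c) is where the main obstacle lies: the partial Latin square $P$ has $2n$ filled entries and exactly two occurrences of each symbol, and a completion theorem---Ryser's Latin-rectangle extension after permuting rows and columns to bring $R\cup B$ into a convenient block, or an Evans--Smetaniuk-type result for sparse partial Latin squares---must be invoked, but not every $P$ produced by (b) is actually completable. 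I therefore anticipate a case analysis on the distribution of $R$ and $B$ across rows and columns: a color filling a full row or column is an easy degenerate case because every diagonal then automatically contains that color, while near-cross and near-transversal configurations require careful joint choices of $R$, $B$, and $\pi$ before the completion theorem applies, with the hypothesis $n\ge 7$ providing the arithmetic slack needed to settle the tight cases uniformly.
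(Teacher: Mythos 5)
Your necessity argument has a flaw: from disjoint diagonals $D_1,\ldots,D_n$ you pick one red cell $r_k$ in each, and then assert that the $r_k$ lie in pairwise distinct rows and columns ``since each $D_k$ is a transversal.'' That inference is false --- representatives of different diagonals can certainly share a row or a column (already for $n=2$: choose $(1,1)$ from one diagonal and $(1,2)$ from the other), so a cross can meet $\{r_1,\ldots,r_n\}$ in far more than two cells. The conclusion is still true, but the correct reason is different: for any candidate cross $(\mathrm{row}\ i\cup\mathrm{col}\ j)\setminus\{(i,j)\}$, the diagonal of the partition passing through $(i,j)$ meets that cross in no cell at all, so its representative lies outside the cross; hence the chosen set cannot be improper. (The paper argues the contrapositive: if one color has no proper $n$-set, it is either too small or confined to a cross, and then the diagonal through the cross's center is monochromatic.)

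The more serious problem is sufficiency, where your proposal stops exactly at the hard part. Steps (a)--(b) are fine --- indeed one can check that for any two disjoint proper $n$-sets Hall's condition holds in your bipartite graph, so a conflict-free pairing and hence a partial Latin square $P$ with $2n$ entries always exists. But step (c) is not a technical afterthought: there is no completion theorem for partial Latin squares of size $2n$ (Evans--Smetaniuk stops at $n-1$, Andersen--Hilton at $n$), and Ryser's theorem only applies when the filled cells form a full $r\times s$ block and each symbol occurs at least $r+s-n$ times; an arbitrary $P$ produced from arbitrary proper sets $R,B$ cannot in general be permuted into such a block with $r+s$ small, and your anticipated ``case analysis'' is precisely the unresolved content of the theorem. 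The paper overcomes this with a genuinely different mechanism: it argues by contradiction, so that Andersen--Hilton repeatedly yields entirely blue diagonals $T_1,T_2$ meeting in one cell; the cycle structure of $T_1\cup T_2$ plus an extra blue cell produces an $s\times t$ sub-rectangle with $s+t=n$ containing $n$ blue cells (Claim~3), a sliding-window argument upgrades this to a $p\times q$ rectangle with $p+q=n+1$ containing $n$ cells of each color (Claim~4), the Hall pairing is then done \emph{inside} this rectangle (where it is easy because each line of the rectangle is short), the whole rectangle is filled, and Ryser's theorem applies because $p+q-n=1$ while every symbol occurs at least twice. Without confining both colors to such a rectangle, your plan has no tool to carry out the completion, so the proposal as it stands does not prove the theorem.
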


The proof of Theorem~\ref{main:thm} relies upon results on completion of partial Latin squares.
\section{Completion of partial Latin squares}
A \emph{Latin square} of \emph{order} $n$ is an $n\times n$ array filled with the symbols $1,\ldots,n$ so that all symbols in each row and each column are distinct. A diagonal in a Latin square consisting of equal symbols is called a \emph{symbol diagonal}.
A \emph{partial Latin square} of \emph{order} $n$ and \emph{size} $k$ is an $n\times n$ array in which exactly $k$ cells are filled and no symbol appears more than once in a row or a column.

As a starting point for our discussion we quote the following well-known theorem, conjectured by Evans \cite{evans1960embedding} and proved by Smetaniuk \cite{smet81}. A different proof was given by Andersen and Hilton \cite{andersen1983thank}.

\begin{thm}\label{thm:smet2}
A partial Latin square of order $n$ and of size at most $n-1$ can be completed to a Latin square of order $n$.
\end{thm}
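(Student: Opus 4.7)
The plan is induction on $n$, with small base cases verified directly. For the inductive step, assume the result for order $n-1$ and let $P$ be a partial Latin square of order $n$ with at most $n-1$ filled cells. Since fewer than $n$ cells are filled while $n$ symbols are available, by pigeonhole some symbol is missing from $P$; after relabeling, I may assume that symbol $n$ does not appear.

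The core of the argument is Smetaniuk's diagonalization trick. First I would permute rows and columns so that every filled cell lies in a position $(i,j)$ with $i + j \le n$, i.e.\ strictly above the main anti-diagonal. Such a staircase arrangement should be achievable because there are only $n-1$ filled cells; a pigeonhole argument on the row and column occupancy counts produces the desired permutation. With this arrangement in hand, the anti-diagonal cells $(1,n), (2,n-1), \ldots, (n-1,2)$ are all empty, and I would fill each of them with symbol $n$. No row or column conflict arises from this insertion because $n$ was absent from $P$, and no two anti-diagonal cells share a row or a column.

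Next I would delete the first column and the last row of the augmented array and relabel so that symbol $n$ becomes unused, obtaining an $(n-1) \times (n-1)$ partial Latin square on symbols $\{1,\ldots,n-1\}$ with at most $n-2$ filled cells. The inductive hypothesis gives a completion of this smaller square. Reinserting the deleted row, column, and the anti-diagonal of $n$'s, I would then have a partial Latin square of order $n$ in which all rows but one are complete, and a standard application of Hall's marriage theorem (or Ryser's Latin-rectangle extension theorem) completes the final row to yield a full Latin square of order $n$.

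The main obstacle is the rearrangement step: proving that rows and columns can be simultaneously permuted to place every filled cell in the upper triangle $i + j \le n$. When the filled cells are concentrated in one row or one column, or are distributed in a way that no staircase ordering accommodates, one needs a careful case analysis, possibly combined with a further relabeling of symbols so that a different symbol plays the role of $n$. This combinatorial lemma, together with the bookkeeping that the diagonalization preserves the partial-Latin-square property under deletion and relabeling, constitutes the essential technical content of Smetaniuk's proof.
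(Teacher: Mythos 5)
The paper does not prove this statement at all: it is the Evans conjecture, quoted with citations to Smetaniuk and to Andersen--Hilton, so the relevant comparison is with Smetaniuk's published argument, which your sketch is modeled on. As a proof, the sketch has two genuine gaps at exactly the places where the real work lies. First, the induction count does not close. You place all (up to) $n-1$ originally filled cells strictly above the back diagonal and put a \emph{fresh} symbol $n$ (one absent from $P$) on the diagonal; the resulting order-$(n-1)$ partial square then still has up to $n-1$ filled cells, one more than the inductive hypothesis allows. In Smetaniuk's proof one must instead choose a symbol that actually occurs in $P$, rename it $n$, and arrange rows and columns so that \emph{all} cells carrying that symbol land on the back diagonal while the remaining at most $n-2$ cells lie strictly above it; whether such an arrangement exists is precisely where a case distinction enters, and the exceptional case is handled separately via Hall/Ryser-type completion of Latin rectangles, not by pigeonhole. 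Your ``delete the first column and last row'' bookkeeping does not repair the count: no filled cell need lie in column $1$ (so nothing is gained), and if one does lie there, deleting it means the eventual completion need not agree with $P$; moreover the retained $(n-1)\times(n-1)$ block still contains your diagonal of $n$'s, so ``relabel so that symbol $n$ becomes unused'' is not a well-defined operation.

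Second, the reinsertion step hides the heart of Smetaniuk's proof. Once the order-$(n-1)$ square is completed by induction, its entries on and below the back diagonal conflict with the planned diagonal of $n$'s; you cannot simply overwrite them and finish with one application of Hall's theorem to ``the final row.'' The genuine construction keeps only the part of the completed square strictly above the back diagonal, writes $n$ on the diagonal, and then fills the cells below the diagonal together with the new last column row by row, resolving the column-$n$ conflicts by a chain of element swaps (an augmenting-path style displacement argument); only after that is the last row forced. This displacement argument, together with the arrangement lemma in its correct (symbol-sensitive) form, is the essential technical content of the theorem, and your proposal replaces both with steps that, as stated, fail.
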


\begin{obs}
Let $M$ be an $n\times n$ array in which at least $n-1$ cells are colored blue. Then, there exists a partition of the cells of $M$ into $n$ disjoint diagonals, so that at least $n-1$ of them contain a blue cell.
\end{obs}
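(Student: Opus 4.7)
The plan is to exploit the natural bijection between partitions of an $n\times n$ array into $n$ disjoint diagonals and Latin squares of order $n$: namely, assign symbol $k$ to cell $(i,j)$ precisely when $(i,j)$ belongs to the $k$-th diagonal. The constraint that each diagonal uses each row and each column at most once is exactly the Latin condition. Under this correspondence, the assertion that diagonal $k$ contains a blue cell becomes the assertion that some blue cell bears the symbol $k$.

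With this translation in hand, I would first select any $n-1$ blue cells of $M$, which exist by hypothesis. Then I would label them with the $n-1$ distinct symbols $1,2,\ldots,n-1$, one symbol per cell. Because each symbol appears in only a single cell, no symbol can repeat in any row or column, so this labelling constitutes a partial Latin square of size $n-1$.

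At this point the main input is Theorem~\ref{thm:smet2} (Smetaniuk's theorem): a partial Latin square of order $n$ and size at most $n-1$ completes to a full Latin square of order $n$. Applying it, I obtain a Latin square $L$ extending the partial labelling. Translating back, $L$ partitions $M$ into $n$ diagonals, one per symbol, and by construction the diagonals corresponding to symbols $1,\ldots,n-1$ each contain (at least) one blue cell. Hence at least $n-1$ of the $n$ diagonals contain a blue cell, as required.

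There is no real obstacle beyond recognizing the Latin-square encoding and verifying that assigning distinct symbols to $n-1$ chosen cells automatically yields a valid partial Latin square; once those two points are in place, Smetaniuk's theorem does all the work.
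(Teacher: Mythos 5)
Your proposal is correct and matches the paper's own argument exactly: assign the distinct symbols $1,\ldots,n-1$ to $n-1$ blue cells (distinct symbols make the Latin condition automatic), complete via Theorem~\ref{thm:smet2}, and take the symbol diagonals. The only addition is your explicit remark on the bijection between Latin squares and diagonal partitions, which the paper leaves implicit.
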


\begin{proof}
We assign the symbols $1,\ldots,n-1$ to the $n-1$ blue cells and obtain a partial Latin square. By Theorem~\ref{thm:smet2}, we can complete it to a Latin square in which the symbol diagonals form a partition of $M$ into diagonals, so that at least $n-1$ of them contains a blue cell.
\end{proof}

In order to explore the case where a square array contains $n$ blue cells we shall use the following theorem of Andersen and Hilton \cite{andersen1983thank}:

\begin{thm}\label{thm:hilton}
A partial Latin square of order $n$ and of size $n$ can be completed to a Latin square of order $n$, unless it can be brought by permuting rows and columns and possibly taking the transpose into one of the two forms depicted in Figure~\ref{fig2}.
\end{thm}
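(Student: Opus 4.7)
The plan is to prove both implications of this characterization. For the necessity direction, I would take each of the two exceptional configurations shown in Figure~\ref{fig2} and verify directly that no completion exists: since the partial data consists of only $n$ cells, the check reduces to identifying, in each form, a row (or column) and a set of symbols whose forced placements collide, producing an unavoidable conflict.

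For the sufficiency direction, my approach is induction on $n$ with Theorem~\ref{thm:smet2} as the principal tool. Given a partial Latin square $P$ of order $n$ and size $n$ that is not, up to row/column permutation and transposition, one of the two exceptional forms, I would try to locate a cell $e\in P$ such that the reduced partial Latin square $P\setminus\{e\}$ (of size $n-1$) can be completed by Theorem~\ref{thm:smet2} to a Latin square $L$ whose entry at $e$ already agrees with $P$. When no such direct completion is obtained for a chosen $e$, I would modify $L$ via intercalate swaps, or by exchanging symbols along a two-colored Kempe chain, rerouting the value at $e$ to match $P$ while preserving the Latin property.

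The principal obstacle is the structural case analysis needed to show that this rerouting is always feasible outside the two excluded configurations. One must classify, for each candidate cell $e\in P$, the local pattern of filled cells and the available intercalates in $L$, and then argue that if every choice of $e$ simultaneously blocks all rerouting options, then $P$ must coincide with one of the exceptional forms in Figure~\ref{fig2}. Equivalently, viewing $P$ as a partial proper $n$-edge-coloring of $K_{n,n}$, one checks that Kempe-chain swaps extend the coloring to all of $K_{n,n}$ unless $P$ has one of two sporadic shapes; confirming that no additional exceptions arise constitutes the bulk of the argument.
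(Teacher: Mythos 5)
This statement is not proved in the paper at all: it is the Andersen--Hilton theorem, quoted from their paper \emph{Thank Evans!} as a known result, so there is no ``paper proof'' to match your argument against. Judged on its own terms, your proposal is a plan rather than a proof, and the gap is exactly where the whole difficulty of the theorem lies. The necessity half (checking that the two configurations of Figure~\ref{fig2} admit no completion) is indeed routine. But for sufficiency you explicitly defer ``the bulk of the argument'': the classification showing that whenever your delete-a-cell-and-reroute scheme fails for every choice of cell $e$, the partial square must be one of the two exceptional forms. That classification is the theorem; without it you have only restated what needs to be proved. The known proofs (Andersen--Hilton's, and the alternative via an extension of Smetaniuk's construction) each require a long and delicate structural analysis, and nothing in your sketch indicates how the two sporadic shapes would emerge or why no further exceptions occur.

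There is also a concrete technical flaw in the rerouting step as described. After completing $P\setminus\{e\}$ by Theorem~\ref{thm:smet2} to a Latin square $L$, you have no control over the symbol $L$ places at $e$, and an intercalate swap or Kempe-chain (symbol-cycle) exchange that corrects the entry at $e$ will in general pass through, and alter, other cells of $L$ --- including cells of $P\setminus\{e\}$ whose prescribed entries must be preserved. You would need to show that a chain avoiding all $n-1$ remaining prescribed cells exists, or that the completion of $P\setminus\{e\}$ can be chosen compatibly in the first place; this is an additional obstruction your outline does not address, and it is not obviously surmountable, since the prescribed cells can be positioned to block short repair cycles. As it stands, the proposal identifies a reasonable reduction but supplies neither the case analysis nor the chain-avoidance argument, so it does not constitute a proof of the statement.
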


\begin{figure}[h!]
  \centering
  \subfigure[]{\label{fig2a}\includegraphics[scale=0.35]{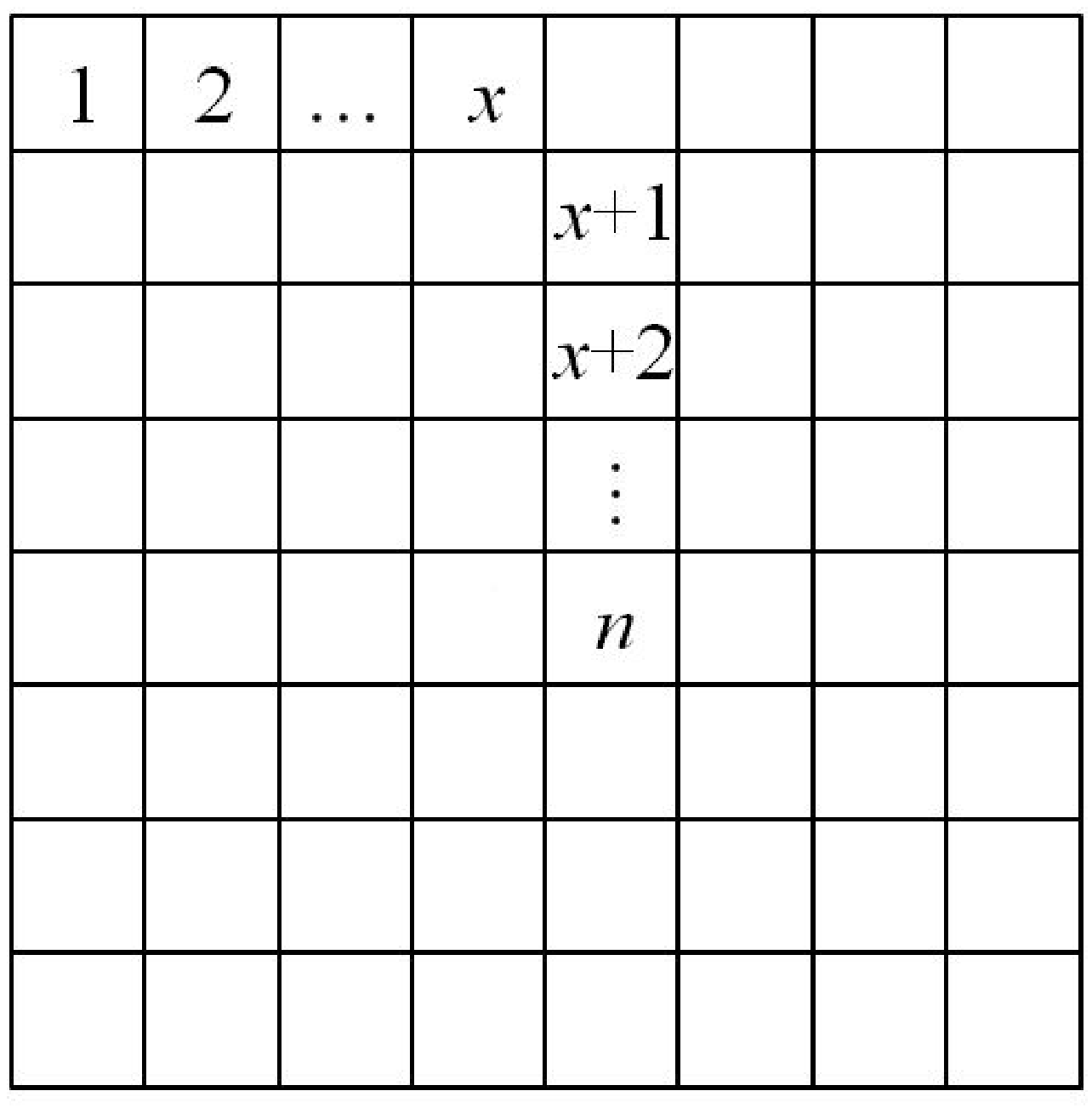}}
  \subfigure[]{\label{fig2b}\includegraphics[scale=0.35]{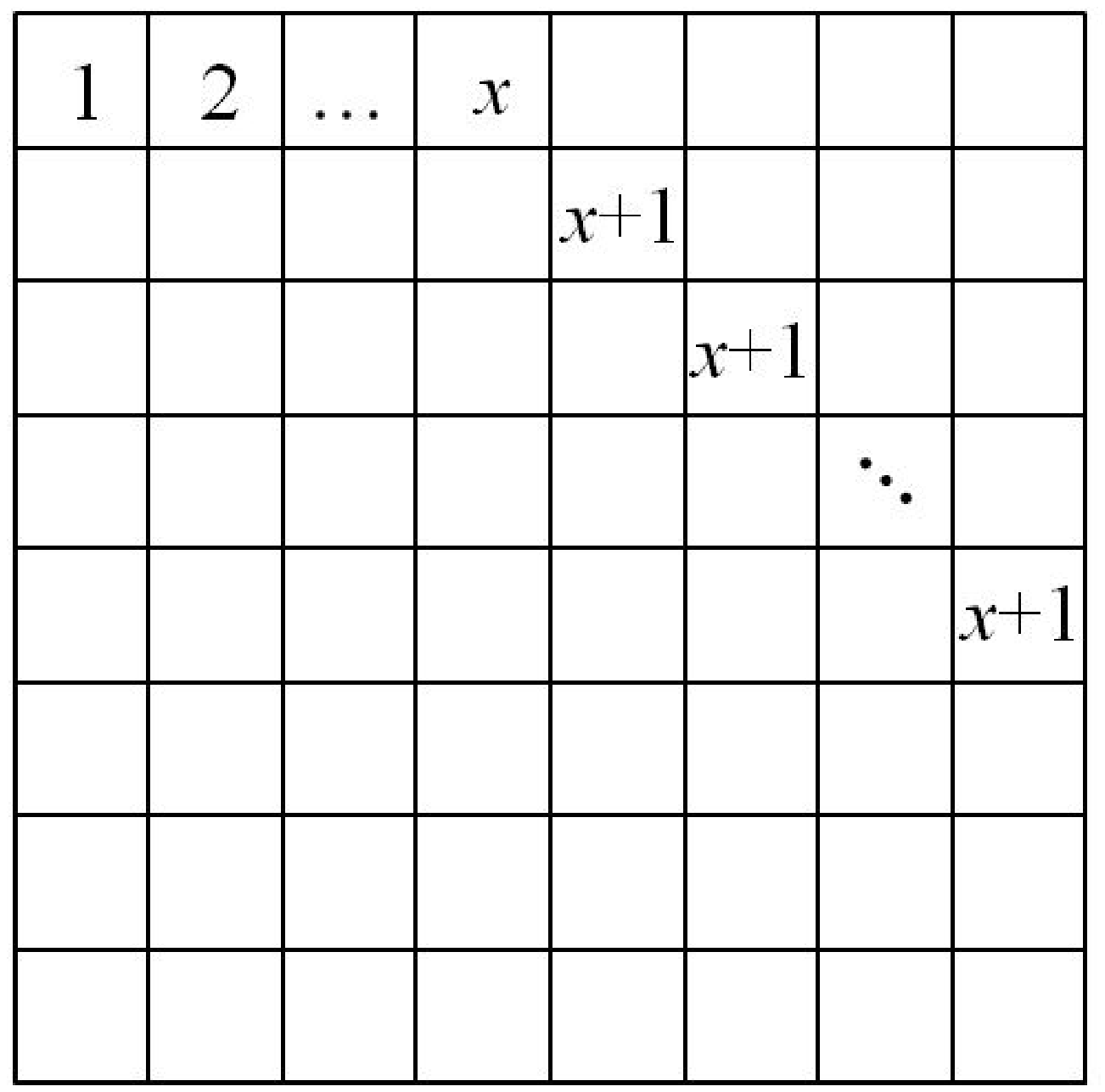}}
  \caption{}
  \label{fig2}
\end{figure}

\begin{obs}\label{cor1}
Let $M$ be a colored $n\times n$ array containing a proper subset of $n$ cells, which are all colored blue. Then, there is a partition of $M$ into $n$ diagonals, each containing a blue cell.
\end{obs}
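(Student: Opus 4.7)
The plan is to promote the $n$ blue cells to a partial Latin square and invoke Theorem~\ref{thm:hilton}. Let $B$ denote the proper set of $n$ blue cells. First, I would bijectively label the cells of $B$ with the symbols $1,2,\ldots,n$, using each symbol exactly once. Because no symbol is repeated anywhere in the array, the resulting partial filling $P$ is automatically a valid partial Latin square of order $n$ and size $n$, regardless of which cell of $B$ receives which label.

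Next, I would apply Theorem~\ref{thm:hilton} to $P$. Either $P$ can be completed to a Latin square $L$ of order $n$, or, after permuting rows, permuting columns, and possibly transposing, $P$ coincides with one of the two exceptional configurations in Figure~\ref{fig2}. The key step — and the main obstacle — is to verify by direct inspection of Figure~\ref{fig2} that in each of the two exceptional configurations the $n$ filled cells lie entirely in the union of a single row and a single column; equivalently, the set of filled cells is improper in the sense of the definition preceding Theorem~\ref{main:thm}. This is exactly the reason the notion of ``improper'' was introduced, so one expects this to be immediate from the figure.

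Once this inspection is carried out, the argument is quickly concluded. Since row permutations, column permutations, and transposition all preserve the proper/improper dichotomy of a cell set, and since $B$ is assumed proper, $P$ cannot be brought to either exceptional form. Hence Theorem~\ref{thm:hilton} yields a completion $L$ of $P$ to a Latin square. The $n$ symbol diagonals of $L$ then partition $M$ into $n$ diagonals, and for each symbol $s\in\{1,\ldots,n\}$ the diagonal corresponding to $s$ contains the unique blue cell of $B$ labelled $s$. Thus every diagonal in the partition contains a blue cell, which is exactly the conclusion sought.
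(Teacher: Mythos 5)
Your route is the paper's route (label the proper set $B$ bijectively with $1,\ldots,n$, invoke Theorem~\ref{thm:hilton}, use invariance of properness under row/column permutations and transposition, and read off the symbol diagonals), but the step you single out as the main obstacle is not actually true as you state it, and this is a genuine gap. It is not the case that in \emph{both} exceptional configurations of Figure~\ref{fig2} the $n$ filled cells lie in the union of a single row and a single column. The two non-completable families of size $n$ correspond to the two possible obstructions: a blocked empty cell, which forces all $n$ entries (carrying $n$ distinct symbols) to lie in one row and one column while avoiding their intersection --- this configuration is indeed improper; and a blocked symbol, in which some symbol occurs $k\ge 2$ times in cells no two of which share a row or column, and the remaining $n-k$ entries (with other symbols) fill all cells of some further row in the columns avoiding that symbol. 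In the second family the filled cells are \emph{not} contained in one row plus one column (already for $n\ge 4$: the $k\ge2$ cells of the repeated symbol occupy $k$ distinct rows and $k$ distinct columns, and the row segment occupies yet another row), so that cell set is proper, and properness of $B$ alone does not exclude it.

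The repair is short, and it is what your argument really needs: your partial Latin square uses each of the symbols $1,\ldots,n$ exactly once, and permuting rows, permuting columns, and transposing do not change the multiset of symbols, so it can never be brought to the repeated-symbol form; the remaining exceptional form is precisely an improper cell set, which properness of $B$ rules out. With that substitution your proof is complete and coincides with the paper's, whose own wording (invoking only properness) is terse in exactly the same place and implicitly relies on this distinct-symbols observation.
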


\begin{proof}
Let $B$ be a proper set of blue cells of size $n$. We assign the symbols $1,\ldots,n$ to the cells of $B$ to obtain a partial Latin square $L$. Since $B$ is proper and properness is preserved under permutation of rows and columns and taking the transpose, it follows from Theorem~\ref{thm:hilton} that $L$ can be completed to a Latin square. The symbol diagonals of this Latin square form a partition of $M$ into diagonals, each containing a blue cell.
\end{proof}

\begin{exmpl}
The array in Figure~\ref{fig3} shows that $2n-2$ blue cells may not ensure the existence of a decomposition into diagonals, each containing a blue cell. Note that any diagonal containing the cell marked with an `x' cannot contain a blue cell.
\end{exmpl}

\begin{figure}[h!]
\begin{center}
\includegraphics[scale=0.3]{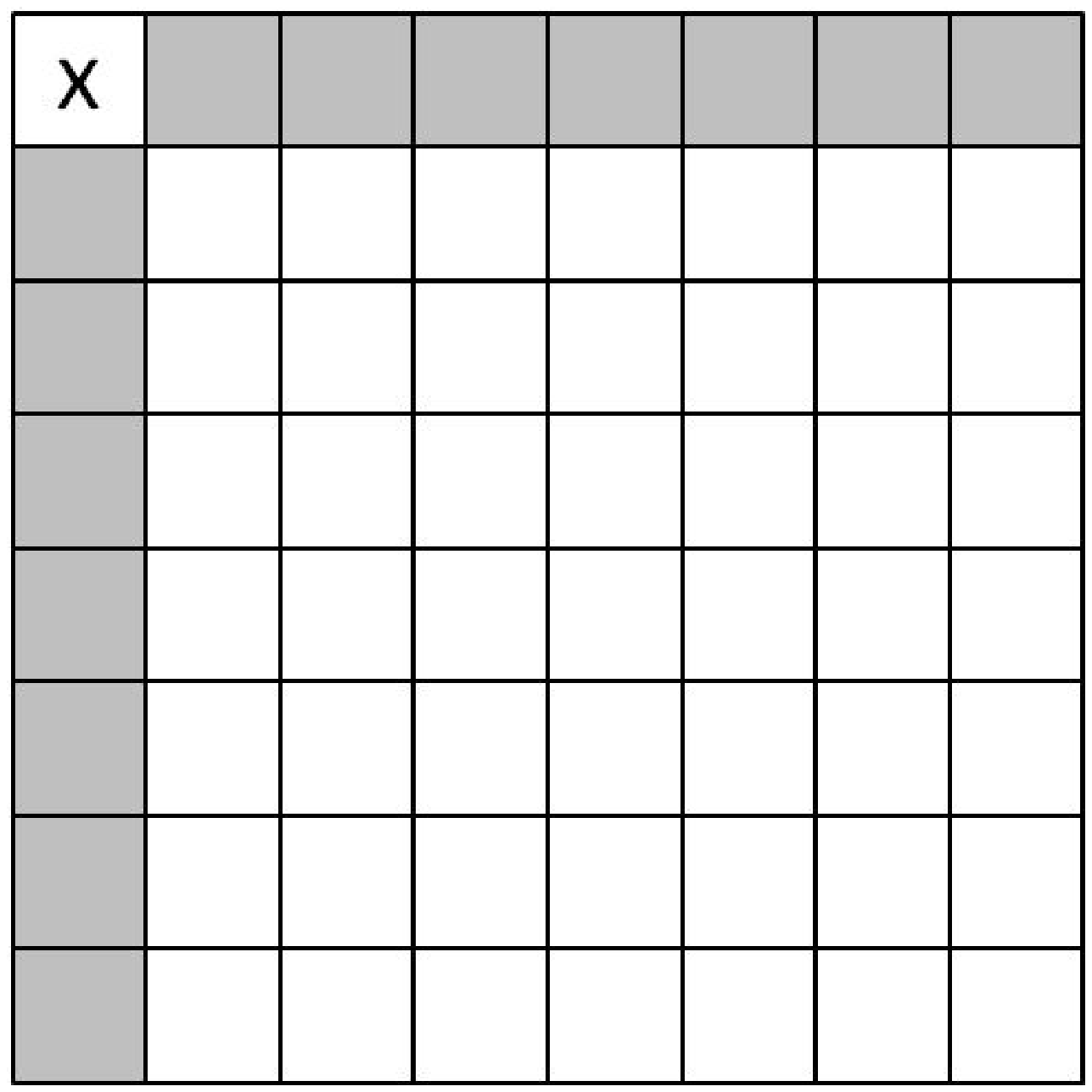}
\end{center}
\caption{}
\label{fig3}
\end{figure}

Since any set of $2n-1$ cells is proper, and thus contains a proper subset of size $n$, we have the following observation:

\begin{obs}\label{cor2}
Let $M$ be a $n\times n$ array in which at least $2n-1$ cells are colored blue. Then, there is a partition of $M$ into $n$ diagonals, each containing a blue cell.
\end{obs}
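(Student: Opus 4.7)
The strategy is to find a proper subset $S \subseteq B$ of exactly $n$ blue cells and then invoke Observation~\ref{cor1} on $S$, which immediately yields a partition of $M$ into $n$ diagonals, each containing a cell of $S$ and hence a blue cell. Here $B$ denotes the set of blue cells, so $|B| \geq 2n-1$.

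The auxiliary fact I would establish first is that any set of cells containing three cells with pairwise distinct rows and pairwise distinct columns is automatically proper: if such a set were improper with witness $(i,j)$, each of its cells would lie in row $i$ or column $j$, yet three cells with distinct rows and distinct columns can meet these two lines in at most two cells, a contradiction. Since impropriety passes to subsets, properness passes to supersets. So if $B$ contains such a ``diagonal triple'', I would pick it and extend it arbitrarily to an $n$-subset of $B$, which is automatically proper.

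If $B$ contains no diagonal triple, then by K\"onig's theorem the cells of $B$ are covered by a pair of lines (rows or columns); a single line would force $|B|\le n < 2n-1$, so exactly two lines must suffice. Since $|B| \ge 2n-1 > 2(n-1)$, pigeonhole forces one of these two lines to be entirely blue---with the only tight subcase being a cover consisting of one row plus one column, where $|R_i \cup C_j| = 2n-1$ compels $B = R_i \cup C_j$ outright. Taking $S$ to be the $n$ cells of a fully-blue row (or column) yields a proper set, since any alleged witness $(p,q)$ for $S = R_i$ would force $p = i$ (to cover the whole row) and hence $(p,q) = (i,q) \in R_i = S$, contradicting the requirement $(p,q) \notin S$.

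The main obstacle is the clean handling of the line-cover cases via pigeonhole; once $S$ is in hand, Observation~\ref{cor1} closes the argument.
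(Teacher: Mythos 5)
Your proof is correct and follows the same overall route as the paper: exhibit a proper subset of exactly $n$ blue cells and invoke Observation~\ref{cor1}. The only difference is how that proper $n$-subset is obtained. The paper disposes of this in a single line --- ``any set of $2n-1$ cells is proper, and thus contains a proper subset of size $n$'' --- which rests on the fact that an improper set is contained in a punctured cross $(R_i\cup C_j)\setminus\{(i,j)\}$ of size $2n-2$, but leaves the passage from properness of the whole $(2n-1)$-set to the existence of a proper $n$-subset implicit (and since properness is not inherited by subsets, this passage does require an argument). Your dichotomy supplies exactly that argument: either the blue set contains three cells in pairwise distinct rows and columns, in which case any $n$-subset containing this triple is proper because a punctured cross meets such a triple in at most two cells; or, by K\"onig's theorem, the blue cells lie in two lines, and $|B|\ge 2n-1$ then forces an entirely blue row or column, which is itself a proper $n$-set (your verification that a full line is proper, and your handling of the tight row-plus-column cover, are both correct). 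So you prove, somewhat more carefully than the paper states it, the same key existence claim, and the remainder of the argument is identical.
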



\section{Proof of the main result}

For the proof of Theorem~\ref{main:thm} we shall need the following theorem of Ryser \cite{ryser51}:

\begin{thm}\label{thm:rys}
Let $0< r,s<n$ and let $A$ be a partial Latin square of order $n$ in which cell $(i,j)$ in $A$ is filled if and only if $i\le r$ and $j\le s$. Then $A$ can be completed to a Latin square if and only if $N(i)\ge r+s-n$ for $i=1,\ldots,n$, where $N(i)$ is the number of cells in $A$ that are filled with $i$.
\end{thm}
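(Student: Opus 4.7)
I treat the two directions separately.

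For necessity, suppose $M$ is partitioned into balanced diagonals $D_1,\dots,D_n$. Choose one blue cell $b_k$ from each $D_k$ and set $B'=\{b_1,\dots,b_n\}$; I claim $B'$ is proper (a symmetric argument handles red). Fix $(i,j)$ and let $D_{k^\ast}$ be the unique diagonal containing $(i,j)$. Every cell of $D_{k^\ast}$ other than $(i,j)$ lies in a row different from $i$ and a column different from $j$, so $D_{k^\ast}\cap\bigl((\mathrm{row}\,i\cup\mathrm{col}\,j)\setminus\{(i,j)\}\bigr)=\emptyset$. Hence $b_{k^\ast}$ lies outside this cross, so $B'$ is not contained in it. Since $(i,j)$ was arbitrary, $B'$ is proper.

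For sufficiency, let $B$ and $R$ be the given proper $n$-sets of blue and red cells. My plan is to construct a Latin square on $M$ whose symbol diagonals each contain a cell of $B$ and a cell of $R$, so that they automatically yield a partition into $n$ balanced diagonals. I proceed in two steps. First, assign symbols $1,\dots,n$ to $B$ and independently to $R$ via bijections $f\colon B\to[n]$ and $g\colon R\to[n]$ so that the combined assignment on $B\cup R$ is a valid partial Latin square, i.e., whenever $b\in B$ and $r\in R$ share a row or column, $f(b)\ne g(r)$. Second, complete this partial Latin square of size at most $2n$ to a full Latin square using the results of Section~2.

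Step one is a bipartite matching problem: after choosing any bijection $f$ (which is automatically valid on $B$ alone since all assigned symbols are distinct), one seeks $g$ avoiding, for each $r\in R$, the forbidden symbol set $\{f(b):b\in B\text{ shares a row or column with }r\}$. I expect Hall's condition to deliver such a $g$, using properness of $B$ to limit the concentration of $B$ along any single row or column. Step two is where the Latin-square completion theorems enter. Since a $2n$-cell partial Latin square need not be completable by Theorem~\ref{thm:smet2} or~\ref{thm:hilton} alone, the heavy lifting is done by Ryser's Theorem~\ref{thm:rys}: after a suitable permutation of rows and columns I drive the filled cells of $B\cup R$ into a rectangular block $[r]\times[s]$, at which point completability reduces to checking the inequality $N(i)\ge r+s-n$ for each symbol $i$.

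The main obstacle is the compatibility of the two steps: the bijections $f,g$ that make the partial square valid must also make Ryser's counting inequality hold after the reduction to a rectangle. I plan to exploit the freedom in the hypothesis -- any proper $n$-subsets of the blue and red cells qualify -- by replacing $B$ or $R$ with a different proper $n$-subset whenever the initial pair lands in an obstructed configuration. A handful of extremal cases (analogous to the Andersen--Hilton exceptions in Figure~\ref{fig2}) will probably require direct construction, and the hypothesis $n\ge 7$ supplies both the slack in Ryser's inequality and enough blue/red cells to permit the exchanges just described.
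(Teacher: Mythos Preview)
Your proposal does not address the stated theorem at all. The statement you were asked to prove is Ryser's classical completion theorem for partial Latin squares whose filled cells form an $r\times s$ rectangle; the paper does not prove this result but merely quotes it from \cite{ryser51} as a tool. What you have written is instead an outline of a proof of Theorem~\ref{main:thm}, the paper's main result on decomposing bi-colored arrays into balanced diagonals.

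Even read as an attempt at Theorem~\ref{main:thm}, your sufficiency argument has a genuine gap. You propose to assign symbols to the cells of $B\cup R$ and then ``drive the filled cells of $B\cup R$ into a rectangular block $[r]\times[s]$'' by permuting rows and columns so that Ryser's theorem applies. But an arbitrary set of up to $2n$ cells cannot in general be confined to a sub-rectangle by row and column permutations: if $B\cup R$ meets every row and every column, the only enclosing rectangle is all of $M$, and Ryser's theorem is vacuous there. The paper's actual argument is structurally different. It does \emph{not} try to squeeze a prescribed $B\cup R$ into a rectangle. Instead, working by contradiction, it first locates a $p\times q$ sub-rectangle $R$ with $p+q=n+1$ that happens to contain at least $n$ blue and at least $n$ red cells (Claims~\ref{claim:1}--\ref{claim:4}), pairs them via Hall's theorem (Claim~\ref{claim:5}), fills the \emph{entire} rectangle $R$ greedily to a partial Latin square, and only then invokes Ryser's theorem with $p+q-n=1$. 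The construction of this well-placed rectangle---via two nearly disjoint blue diagonals, a graph-theoretic count, and a sliding-window argument---is the technical heart of the proof and is entirely missing from your outline.
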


\setcounter{claim}{0}

\begin{proof}[Proof of Theorem~\ref{main:thm}]
Let $M_b$ and $M_r$ be the subsets of $M$ consisting of blue and red cells, respectively. Without loss of generality we may assume that $|M_b|\le |M_r|$. If $|M_b|<n$, then clearly there is no decomposition of $M$ into balanced diagonals. Hence, we may assume that $|M_b|\ge n$. If $M_b$ does not contain a proper subset of size $n$, then $M_b$ is improper. Suppose $M_b$ is contained in row $i$ and column $j$, then for any partition of $M$ into diagonals, the diagonal through $m_{ij}$ will be contained in $M_r$. Thus, the condition is necessary.

We now show that the condition of the theorem is sufficient. For contradiction, we make the following assumption:

\begin{assm}\label{assm:1}
A decomposition of $M$ into balanced diagonals does not exist.
\end{assm}

The proof consists of a sequence of claims.

\begin{claim}\label{claim:1}
$M_b$ contains two diagonals $T_1$ and $T_2$ such that $|T_1\cap T_2|=1$.
\end{claim}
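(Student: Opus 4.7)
The plan is to apply Observation~\ref{cor1} twice, combining it with Assumption~\ref{assm:1} each time to promote ``every diagonal meets $M_b$'' to ``some diagonal lies inside $M_b$''.

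First, I would invoke Observation~\ref{cor1} on the proper subset of $M_b$ of size $n$ provided by the hypothesis of the theorem, obtaining a partition of $M$ into diagonals $D_1,\ldots,D_n$ such that each $D_i$ contains a blue cell. Under Assumption~\ref{assm:1}, this partition is not balanced, so some $D_i$ must fail to contain a red cell; that is, $D_i\subseteq M_b$. I call this all-blue diagonal $T_1$. Note that a diagonal is always a proper set: its $n$ cells occupy all $n$ rows and all $n$ columns, so they cannot be covered by a single row together with a single column.

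Second, I would reapply Observation~\ref{cor1} using $T_1$ itself as the proper set of $n$ blue cells. Following the proof of that observation, I assign the symbols $1,\ldots,n$ to the cells of $T_1$ and invoke Theorem~\ref{thm:hilton} to complete the resulting partial Latin square to a Latin square $L$. Its symbol diagonals $E_1,\ldots,E_n$ partition $M$, and crucially each $E_k$ meets $T_1$ in exactly the unique cell of $T_1$ carrying the symbol $k$, since no other cell of $T_1$ can receive that symbol. In particular, every $E_k$ contains a blue cell, so by Assumption~\ref{assm:1} some $E_{k_0}$ must again be entirely blue; setting $T_2:=E_{k_0}$ yields $T_1,T_2\subseteq M_b$ with $|T_1\cap T_2|=1$. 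The argument is quite direct: there is no real obstacle once one recognises that Assumption~\ref{assm:1} is precisely the extra hypothesis needed to convert Observation~\ref{cor1} into the production of a monochromatic blue diagonal, and that seeding the second partial Latin square with $T_1$ itself forces each completed symbol diagonal to intersect $T_1$ in exactly one cell.
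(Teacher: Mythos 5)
Your proposal is correct and follows essentially the same route as the paper: use Observation~\ref{cor1} together with Assumption~\ref{assm:1} to extract an all-blue diagonal $T_1$, fill $T_1$ with the symbols $1,\ldots,n$, complete to a Latin square, and apply Assumption~\ref{assm:1} again to find an all-blue symbol diagonal $T_2$, which necessarily meets $T_1$ in exactly one cell. Your added remarks (that a diagonal is proper, and that every symbol diagonal meets $T_1$ and hence contains a blue cell) just make explicit what the paper leaves implicit.
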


\begin{proof}[Proof of Claim~\ref{claim:1}]
\renewcommand{\qedsymbol}{}
By Observation~\ref{cor1}, there exists a decomposition of $M$ into diagonals, each containing a blue cell. By Assumption~\ref{assm:1}, at least one of these diagonals is contained in $M_b$. We denote this diagonal by $T_1$. If we put all the symbols $\{1,\ldots,n\}$ in $T_1$, then clearly we have a partial Latin square that can be completed to a Latin square $L'$.
By Assumption~\ref{assm:1}, at least one of the symbol diagonals of $L'$ is contained in $M_b$. Let $T_2$ be one such diagonal. We have $T_1\cup T_2\subset M_b$ and $|T_1\cap T_2|=1$, since $T_1$ contains the distinct symbols $1,\ldots,n$ and $T_2$ contains the same symbol in all its cells.
\end{proof}

\begin{claim}\label{claim:2}
Let $T_1\cap T_2=\{m_{ij}\}$. Then, there exists a cell in $M_b\setminus (T_1\cup T_2)$ which is not in row $i$ and not in column $j$.
\end{claim}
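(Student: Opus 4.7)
I would argue by contradiction. Suppose that every blue cell of $M$ outside $T_1\cup T_2$ lies in row $i$ or column $j$, that is, $M_b\subseteq T_1\cup T_2\cup R_i\cup C_j$, where $R_i$ and $C_j$ denote row $i$ and column $j$ respectively. The goal is to show, under this restriction, that a balanced decomposition of $M$ can be constructed, contradicting Assumption~\ref{assm:1}.

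Let $M'$ be the $(n-1)\times(n-1)$ sub-array obtained by deleting row $i$ and column $j$. Under the contradiction hypothesis, the blue cells of $M'$ are precisely the two disjoint diagonals $T'_1:=T_1\setminus\{m_{ij}\}$ and $T'_2:=T_2\setminus\{m_{ij}\}$ of $M'$, while the remaining $(n-1)(n-3)$ cells of $M'$ are red; for $n\ge 7$ this leaves abundant red cells to work with. I would build the balanced decomposition in two stages. First, choose a distinguished diagonal $D_1$ of $M$ that passes through $m_{ij}$ and whose $M'$-part is not contained in $T'_1\cup T'_2$, so that $D_1$ contains the blue cell $m_{ij}$ together with at least one red cell (such a $D_1$ exists since only a few diagonals of $M'$ lie entirely inside $T'_1\cup T'_2$, whereas $M'$ has $(n-1)!$ diagonals in total). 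Second, partition the remainder $M\setminus D_1$ into $n-1$ further balanced diagonals.

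For the second stage I would encode the required partition as a partial Latin square of $M$: pre-place two distinguished symbols on the surviving cells of $T'_1$ and $T'_2$, and suitably chosen symbols on selected cells of $R_i$ and $C_j$, in such a way that each resulting symbol diagonal must meet $T'_1\cup T'_2$ (supplying a blue cell) and also meet $M'\setminus(T'_1\cup T'_2)$ (supplying a red cell). After permuting rows and columns to place the pre-assigned cells into a rectangular block, Ryser's Theorem (Theorem~\ref{thm:rys}) would complete the partial Latin square, and its symbol diagonals would form the desired balanced partition of $M$, contradicting Assumption~\ref{assm:1}.

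The principal obstacle is the design of the pre-assignment so that Ryser's inequality $N(k)\ge r+s-n$ is satisfied for every symbol $k$ while simultaneously forcing each symbol diagonal to inherit both a blue and a red cell. This reduces to a careful count of blue cells in $R_i$ and $C_j$ together with the structural constraints imposed by $T'_1$ and $T'_2$; the hypothesis $n\ge 7$ is what supplies the combinatorial slack needed for this bookkeeping to close.
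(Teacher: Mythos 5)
Your overall strategy (prove the contrapositive: if $M_b\subseteq T_1\cup T_2\cup R_i\cup C_j$ then a balanced decomposition exists, contradicting Assumption~\ref{assm:1}) is legitimate but quite different from the paper, which instead builds a second partition into blue-containing diagonals from a proper $n$-subset $D\subseteq T_1\cup T_2$ concentrated in $\lceil n/2\rceil$ columns, uses Assumption~\ref{assm:1} to extract a third all-blue diagonal $T_3$, and then simply counts: $T_3$ restricted to those columns has $\lceil n/2\rceil$ cells, of which at most three lie in $D$, row $i$ or column $j$, leaving at least $\lceil n/2\rceil-3\ge 1$ blue cells outside $T_1\cup T_2$ and outside row $i$ and column $j$. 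Your construction, by contrast, has a genuine flaw at its key step: pre-placing ``two distinguished symbols'' on the surviving cells of $T'_1$ and $T'_2$ defeats your own purpose. If symbol $1$ occupies all of $T'_1$ and symbol $2$ all of $T'_2$, the remaining $n-2$ symbol diagonals contain no cell of $T'_1\cup T'_2$, and in the extremal (and perfectly admissible) case $M_b=T_1\cup T_2$ there are no blue cells in $R_i\cup C_j$ other than $m_{ij}$ to rescue them; no choice of ``selected cells of $R_i$ and $C_j$'' can make those $n-2$ diagonals balanced. What you would actually need is to distribute $T'_1\cup T'_2$ among $n-1$ symbols, one cell of $T'_1$ and one of $T'_2$ per symbol in distinct rows and columns (a Hall-type pairing as in Claim~\ref{claim:5}); that pairing and its completion are precisely the real work, and they are absent.

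The appeal to Ryser's theorem is also not available as described. Theorem~\ref{thm:rys} applies only when the filled cells form, after permuting rows and columns, a \emph{completely} filled $r\times s$ block in which every symbol occurs at least $r+s-n$ times. The cells of $T'_1\cup T'_2$ meet all $n-1$ rows and all $n-1$ columns of $M'$, so the smallest rectangle containing your pre-assignment is essentially all of $M'$; you would have to fill that entire block consistently and verify the inequality for every symbol, none of which is addressed. Theorems~\ref{thm:smet2} and~\ref{thm:hilton} do not apply either, since your pre-assignment has roughly $2n-2$ or more filled cells. Finally, you give no concrete mechanism forcing a red cell onto each of the $n-1$ diagonals in the case where extra blue cells do lie in row $i$ or column $j$. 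As it stands, the proposal is a plan whose decisive steps are missing, whereas the paper's argument closes the claim in a few lines.
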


\begin{proof}[Proof of Claim~\ref{claim:2}]
\renewcommand{\qedsymbol}{}
Note that we can choose $\lceil n/2 \rceil$ columns $C_1,\ldots C_{\lceil n/2 \rceil}$ of $M$, such that $D=(\cup_{i=1}^{\lceil n/2 \rceil} C_i)\cap (T_1\cup T_2)$ has size $n$ (if $n$ is even we take any set of $n/2$ columns that does not include column $j$. If $n$ is odd we take any set of $\lceil n/2 \rceil$ columns that includes column $j$).
If we put the symbols $1,\ldots,n$ in the cells of $D$, we can complete to a Latin square $L''$, by Theorem~\ref{thm:hilton}. By Assumption~\ref{assm:1}, $L''$ must have a symbol diagonal $T_3$ which is contained in $M_b$. Note that $|T_3\cap D|=1$ since $D$ contains distinct symbols and $T_3$ has the same symbol in all its cells. We look at the set $T_3\cap (\cup_{i=1}^{\lceil n/2 \rceil}  C_i)$, which is of size $\lceil n/2 \rceil$. It contains one cell of $D$, possibly one cell from row $i$ and possibly one cell from column $j$. Thus, the set of cells $T_3\cap (\cup_{i=1}^{\lceil n/2 \rceil}  C_i)$ contains at least $\lceil n/2\rceil-3$ cells which are neither in $T_1\cup T_2$ nor in column $i$ nor in column $j$. This number is positive since we assumed $n\ge 7$.
\end{proof}

\begin{claim}\label{claim:3}
The array $M$ contains an $s\times t$ sub-rectangle $R_1$, such that $s+t= n$, $s-1\le t\le s+1$ and $|R_1\cap M_b|\ge n$.
\end{claim}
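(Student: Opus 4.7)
My approach is to analyze the structure of $T_1\cup T_2$ through the cycles of its symmetric difference. Regarded as perfect matchings in the bipartite graph between the rows and columns of $M$, the diagonals $T_1$ and $T_2$ share only the edge $(i,j)$, so $T_1\triangle T_2$ is a disjoint union of even cycles that cover the $n-1$ rows other than $i$ and the $n-1$ columns other than $j$. I will write the cycle lengths as $2\ell_1,\ldots,2\ell_m$ with $\sum_h \ell_h = n-1$; cycle $C_h$ occupies $\ell_h$ rows and $\ell_h$ columns and contributes exactly $2\ell_h$ cells to $T_1\cup T_2$. Together with the extra blue cell $b=(i',j')$ from Claim~\ref{claim:2}, this gives $2n$ blue cells that I can try to place inside the rectangle.

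First I would try to let $A$ and $B$ be the rows and columns used by some subset $S$ of the cycles with $p:=\sum_{h\in S}\ell_h=\lfloor n/2\rfloor$. When $n$ is even, $A\times B$ is an $(n/2)\times(n/2)$ rectangle containing all $2p=n$ cells of $T_1\cup T_2$ from the chosen cycles, and the claim is proved. When $n$ is odd, $A\times B$ is $((n-1)/2)\times((n-1)/2)$ with $n-1$ such cells; I then enlarge $B$ by adjoining the column $j'$ of $b$ so that $|B|=(n+1)/2$, and I choose $S$ to contain the (unique) cycle that houses $i'$, so that $b$ itself lies in $A\times B$ and becomes the $n$th cell.

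If no subset of whole cycles sums to $\lfloor n/2\rfloor$, I would fall back on a \emph{partial} cycle: any path of $q$ consecutive rows along a cycle uses $q$ rows and $q+1$ adjacent columns and captures all $2q$ cells of $T_1\cup T_2$ incident to those rows. Combining some whole cycles with a partial cycle, and (when useful) adjoining row $i$ together with column $j$ to pick up the cell $(i,j)$, allows me to tune $|A|+|B|=n$ and $\bigl||A|-|B|\bigr|\le 1$ while keeping the cell count from $T_1\cup T_2$ at $n$ or $n-1$. The cyclic freedom in choosing the starting position of the window within the partial cycle is then used to make $i'$ fall in $A$ and $j'$ fall in $B$, so that $b$ contributes the missing cell whenever needed.

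The main obstacle will be the extremal case where $T_1\triangle T_2$ consists of a single cycle of length $2(n-1)$, since then no proper subset of whole cycles is available and the entire construction depends on the partial-cycle argument. I expect to settle this by a short arc-length count on the cycle: the set of window starts $a$ for which $i'$ lies in the row-window is an arc of length $|A|$ in the cyclic $a$-parameter space, and the set for which $j'$ lies in the column-window is an arc of length $|B|$; since $|A|+|B|=n$ exceeds the cycle length $n-1$, the two arcs must overlap, producing a valid $a$ and completing the claim.
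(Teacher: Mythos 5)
Your overall route is the same as the paper's: regard $(T_1\cup T_2)\setminus\{m_{ij}\}$ as a $2$-regular bipartite graph $G$, i.e.\ a disjoint union of even cycles on the rows other than $i$ and the columns other than $j$, and span a sub-rectangle with $s+t=n$ by a union of whole cycles and possibly one partial arc, using the extra blue cell $b=(i',j')$ of Claim~\ref{claim:2} to reach $n$ blue cells. Your easy cases are sound: a subfamily of whole cycles with exactly $\lfloor n/2\rfloor$ rows (containing, for odd $n$, the cycle through row $i'$), and the extremal single-cycle case, where your arc-overlap count is a correct and rather elegant substitute for the paper's explicit choice of a path with $n-1$ edges through the two endpoints of $b$.

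The genuine gap is that the step carrying the real content is left as an assertion: ``combining some whole cycles with a partial cycle \ldots allows me to tune $|A|+|B|=n$ \ldots'' together with ``the cyclic freedom \ldots is then used to make $i'$ fall in $A$ and $j'$ fall in $B$.'' This is precisely the paper's Observation~\ref{obs2} (for any two vertices of $G$ there is a union of cycles and at most one path with exactly $n-1$ edges containing both), and it does need an argument: when the partial window sits inside a cycle alongside other whole cycles, it no longer almost fills that cycle, so your two arcs need not overlap; and when $i'$ and $j'$ lie in \emph{different} cycles, one sliding window cannot capture both, so you must show (using $|E(G)|=2n-2$) that the smaller of the two relevant cycles fits whole inside the budget while the other is windowed through its special vertex. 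There is also a parity flaw in the fallback as stated: your only window shape has $q$ rows and $q+1$ columns, and whole cycles contribute equally many rows and columns, so in the fallback $|A|+|B|$ is always odd and can never equal $n$ for even $n$; you also need windows with equally many rows and columns (paths with an odd number of edges), after which the arc count must be redone. Two smaller slips: with a partial window present and $|A|+|B|=n$, the number of captured cells of $T_1\cup T_2$ is exactly $n-1$ (never $n$), so $b$ is always needed there; and in the odd-$n$ whole-cycle case, if column $j'$ already lies in $B$ you should adjoin an arbitrary new column instead. All of this is repairable, and the cleanest repair is exactly the paper's Observation~\ref{obs2} with its short two-case analysis.
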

\begin{proof}[Proof of Claim~\ref{claim:3}]
\renewcommand{\qedsymbol}{}
Let $T_1\cap T_2=\{m_{ij}\}$ and let $m_{kl}\in M_b\setminus (T_1\cup T_2)$, as in Claim~\ref{claim:2}, that is $k\ne i$ and $\l\ne j$. We regard $T_1$ and $T_2$ as two perfect matchings in $K_{n,n}$ and consider the subgraph $G$ of $K_{n,n}$ consisting of the edges in $(T_1\cup T_2)\setminus \{m_{ij}\}$. Since $T_1\cap T_2=\{m_{ij}\}$, it follows that $G$ is the disjoint union of simple even cycles, each of length at least 4. We make the following two observations:

\begin{obs}\label{obs1}
For any $k\le 2n-2$, every induced subgraph of $G$ with $k$ edges, consisting of the union of cycles and possibly one path, has at most $k+1$ vertices.
\end{obs}

\begin{obs}\label{obs2}
For any two vertices $u$ and $v$ in $G$, there exists an induced subgraph $H$ of $G$ containing $u$ and $v$, consisting of the union of cycles and possibly one path, such that $|E(H)|=n-1$.
\end{obs}

The proof of Observation~\ref{obs1} is left to the reader. We prove Observation~\ref{obs2}.
\begin{proof}[Proof of Observation~\ref{obs2}]
\renewcommand{\qedsymbol}{}
First assume $u$ and $v$ are in the same simple cycle $C$ of $G$. If $|C|\le n-1$ we take $C$ and add cycles and possibly one path (contained in a simple cycle) in $G$ to obtain $H$ as required. If $|C|> n-1$ we can take $H$ to be a path in $C$ containing $u$ and $v$. Since $|E(G)|=2n-2$, such a path $H$ with $n-1$ edges always exists.

Now, assume $u$ and $v$ lie in disjoint cycles $C_u$ and $C_v$ of $G$, respectively. Since $|E(G)|=2n-2$, we may assume, without loss of generality, that $C_u$ has size at most $n-1$. If the size of $C_u\cup C_v$ is greater than $n-1$ we take $C_u$ and a path containing $v$ from $C_v$ to obtain $H$ as required (in the case that $C_u$ has size exactly $n-1$ we just add the path of length 0 consisting of $v$). In case the size of $C_u\cup C_v$ is less than $n-1$ we take $C_u\cup C_v$ and add possibly more cycles and possibly one path from $G$ to obtain $H$.
\end{proof}

Let $e$ be the edge corresponding to $m_{kl}$. Since $k\ne i$ and $\l\ne j$ and $G$ consists of the edges in $(T_1\cup T_2)\setminus \{m_{ij}\}$, the endpoints of $e$ are in $V(G)$. Let $u$ and $v$ be the endpoints of $e$. By Observations~\ref{obs1} and \ref{obs2}, there is an induced subgraph $H$ of $G$ containing $u$ and $v$, such that $|E(H)|=n-1$ and $n-1\le|V(H)|\le n$. Let $s$ and $t$ be the sizes of the two sides of $H$. We have $n-1\le s+t\le n$ and, since $H$ is the union of cycles and possibly one path, we must have that $s$ and $t$ differ by at most 1. The graph $H\cup\{e\}$ corresponds to a sub-rectangle of $M$ of size $s\times t$. If $s+t=n-1$ we augment this rectangle by a row or a column to satisfy $s+t=n$ and $s-1\le t\le s+1$.
\end{proof}

\begin{claim}\label{claim:4}
$M$ contains a $p\times q$ sub-rectangle $R$, such that $p+q=n+1$, $p-2\le q\le p+2$, $|R\cap M_b|\ge n$ and $|R\cap M_r|\ge n$.
\end{claim}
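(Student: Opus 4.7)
The plan is to produce $R$ by adjoining one row or one column to the rectangle $R_1$ furnished by Claim~\ref{claim:3}. Since $R_1$ has $s+t=n$ and $|s-t|\le 1$, any such augmentation automatically has dimensions $p\times q$ with $p+q=n+1$ and $|p-q|\le 2$; the containment $R_1\subseteq R$ makes $|R\cap M_b|\ge n$ free of charge, so the only task is to arrange $|R\cap M_r|\ge n$. Write $r_0=|R_1\cap M_r|$; the case $r_0\ge n$ is trivial, so I assume $r_0<n$.

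First I would average over the $n$ possible augmentations (the $n-s$ rows and $n-t$ columns of $M$ outside $R_1$). The sum of the added red cells over all these choices counts each red cell in the two $L$-shaped extension regions of $R_1$ exactly once, so it equals the number of red cells in $(\text{rows of }R_1 \cup \text{columns of }R_1)\setminus R_1$. Using $|M_r|\ge n^2/2$ (from $|M_b|\le |M_r|$), together with $|R_1|=st\le\lceil n/2\rceil^2$ and the fact that the ``outside'' region has size $st$, this sum is at least $n^2/2-r_0-st$. Pigeonhole then produces an augmentation contributing at least $(n^2/2-r_0-st)/n$ added red cells, which is $\ge n-r_0$ whenever $(n-1)r_0\ge n^2/2+st$; for such $r_0$ the required $R$ is produced immediately.

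When $r_0$ falls below this threshold, so that $R_1$ is almost entirely blue, I would apply Claim~\ref{claim:3} symmetrically to the red colour (legitimate because the theorem's hypothesis also gives a proper $n$-subset of $M_r$) to obtain a rectangle $R_1'$ of dimensions $s'\times t'$ with $s'+t'=n$, $|s'-t'|\le 1$, and $|R_1'\cap M_r|\ge n$. If $|R_1'\cap M_b|\ge n$, a symmetric one-row or one-column augmentation of $R_1'$ produces $R$; otherwise $R_1$ and $R_1'$ are both nearly monochromatic in opposite colours, which forces their cell sets to be essentially disjoint, and I would construct $R$ from a mixed rectangle $I(R_1)\times J(R_1')$ or $I(R_1')\times J(R_1)$, possibly augmented by one further row or column so that $p+q=n+1$; the blue bound is inherited from the rows of $R_1$ and the red bound from the columns of $R_1'$. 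The main obstacle is precisely this last subcase: because each of $R_1, R_1'$ is allowed as many as $n-1$ cells of the wrong colour, the mixed rectangle may inherit up to $2(n-1)$ exceptional cells, and confirming that after discounting these both colour counts still reach $n$ requires careful bookkeeping of the overlaps $|I(R_1)\cap I(R_1')|$ and $|J(R_1)\cap J(R_1')|$; this is where the hypothesis $n\ge 7$ provides the necessary slack.
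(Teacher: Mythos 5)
Your averaging step is sound as far as it goes (the region outside the rows and columns of $R_1$ has exactly $(n-s)(n-t)=st$ cells, so some one-row or one-column augmentation does add at least $(n^2/2-r_0-st)/n$ red cells), but it only resolves the case $r_0\gtrsim 3n/4$, and the remaining case --- which is the heart of the claim --- is not handled by a valid argument. The proposed construction there, taking the mixed rectangle formed by the rows of $R_1$ and the columns of the red rectangle $R_1'$, is not merely in need of ``careful bookkeeping'': it can fail outright. Concretely, let the blue cells be exactly the top-left $\lceil n/2\rceil\times\lfloor n/2\rfloor$ block (so $R_1$ is this block, entirely blue, $r_0=0$) and let every other cell be red, with $R_1'$ the opposite (bottom-right) block, entirely red. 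Then both mixed rectangles $I(R_1)\times J(R_1')$ and $I(R_1')\times J(R_1)$ are the off-diagonal quadrants, which contain no blue cell at all. The assertion that ``the blue bound is inherited from the rows of $R_1$'' is the gap: the $n$ blue cells guaranteed in those rows sit in the columns $J(R_1)$, and nothing forces any of them into the columns $J(R_1')$. In this example a valid $R$ exists, but it must take \emph{some} columns from $J(R_1)$ and some from outside, so no fixed recipe of the form ``rows of one rectangle, columns of the other'' (plus one augmentation) can work, and the overlap bookkeeping you flag cannot repair it.

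The missing idea is a discrete intermediate-value argument, which is how the paper proceeds: since $|M_b|\le|M_r|$, besides the blue-heavy $s\times t$ rectangle $R_1$ there is some $s\times t$ rectangle $R_2$ with a red majority (hence, as $n\ge7$, at least $n$ red cells). Slide an $s\times t$ window from $R_1$ to $R_2$, changing one row or one column at a time; at some step a window $R'$ with a blue majority is replaced by a window $R''$ with a red majority, and $R=R'\cup R''$ is a $p\times q$ rectangle with $p+q=s+t+1=n+1$, $|p-q|\le2$, containing at least $n$ blue cells (from $R'$) and at least $n$ red cells (from $R''$). This sidesteps both your threshold case-split and the problematic mixed-rectangle construction.
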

\begin{proof}[Proof of Claim~\ref{claim:4}]
\renewcommand{\qedsymbol}{}

Let $R_1$ be a sub-rectangle of $M$ as in Claim~\ref{claim:3}. If $R_1$ contains also $n$ red cells, then we are done. Otherwise, since $|M_b|\le |M_r|$, the square $M$ must contain another $s\times t$ sub-rectangle $R_2$ with a majority of red cells and, since $n\ge 7$, $|R_2\cap M_r|\ge n$. If $R_2$ contains $n$ blue cells we are done, so we assume it does not. We can travel from $R_1$ to $R_2$ using an $s\times t$ sliding window (Figure~\ref{fig4}), so that in each step we either drop a row and add a row or drop a column and add a column. Clearly, at some stage, by exchanging a single row or a single column we shall move from a rectangle $R'$ containing a majority of blue cells to a rectangle $R''$ containing a majority of red cells. Let $R=R'\cup R''$.  Clearly, $R$ is a rectangle containing $n$ blue cells and $n$ red cells and its size is $p\times q$ satisfying $p+q=s+t+1=n+1$ and $p$ and $q$ differ at most by 2.

\begin{figure}[h!]
\begin{center}
\includegraphics[scale=0.4]{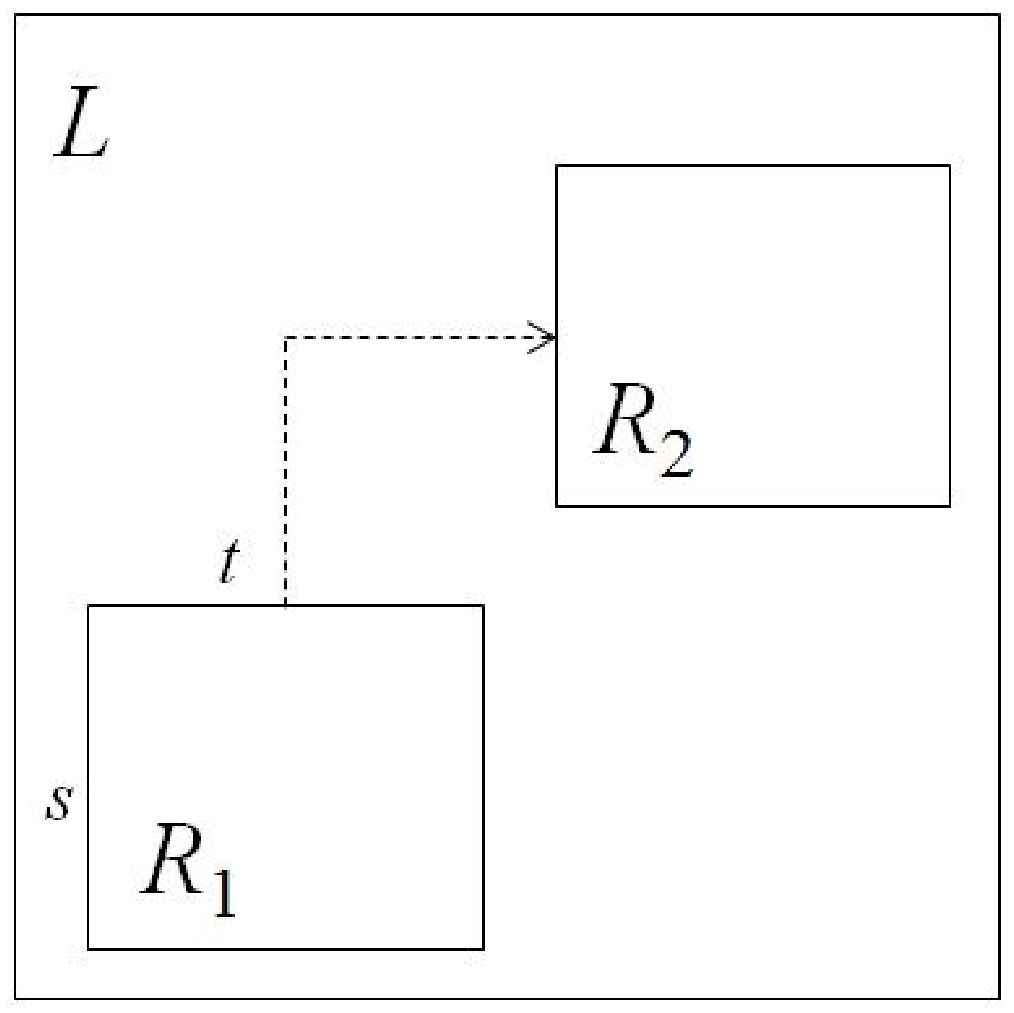}
\end{center}
\caption{}
\label{fig4}
\end{figure}

\end{proof}

\begin{claim}\label{claim:5}
Let $R$ be a sub-rectangle of $M$ as in Claim~\ref{claim:4}. Then, we can fill $n$ blue cells and $n$ red cells of $R$ with the numbers $1,\ldots,n$ so that each number appears once in a blue cell and once in a red cell, to form a partial Latin square.
\end{claim}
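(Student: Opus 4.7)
The condition of forming a partial Latin square in which each symbol appears exactly twice---once in a blue cell and once in a red cell---is equivalent to requiring that for each symbol $k$, the two cells carrying $k$ lie in different rows and different columns. So the claim reduces to finding a matching of size $n$ in the bipartite graph $G$ whose parts are $R\cap M_b$ and $R\cap M_r$, with an edge $br$ whenever $b$ and $r$ lie in different rows and different columns; distinct symbols from $\{1,\ldots,n\}$ can then be assigned to the $n$ matched pairs.

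The plan is to prove that this matching exists via K\"onig's theorem, by showing that no vertex cover of $G$ has size at most $n-1$. Suppose for contradiction that $C=C_b\cup C_r$ is such a cover, and set $B^{*}=(R\cap M_b)\setminus C_b$ and $R^{*}=(R\cap M_r)\setminus C_r$. Since $C$ covers every edge of $G$, every $b\in B^{*}$ and every $r\in R^{*}$ share a row or a column of $M$. On the other hand, $|R\cap M_b|+|R\cap M_r|=pq\ge 2n$, which together with $|C_b|+|C_r|\le n-1$ yields $|B^{*}|+|R^{*}|\ge pq-n+1\ge n+1$.

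The core of the argument is then a case analysis on the geometry of $B^{*}$. If $B^{*}$ lies in a single row $i_0$ and $|B^{*}|\ge 2$, then intersecting the constraints $R^{*}\subseteq \mathrm{row}(i_0)\cup\mathrm{col}(j_b)$ over the distinct columns $j_b$ of the cells $b\in B^{*}$ forces $R^{*}\subseteq \mathrm{row}(i_0)$, so $|B^{*}|+|R^{*}|\le q\le (n+3)/2 < n+1$; the ``$B^{*}$ in a single column'' case is symmetric. Otherwise $B^{*}$ contains $b_1=(i_1,j_1)$ and $b_2=(i_2,j_2)$ with $i_1\ne i_2$ and $j_1\ne j_2$, and the same intersection gives $R^{*}\subseteq\{(i_1,j_2),(i_2,j_1)\}$, so $|R^{*}|\le 2$; applying the analysis symmetrically pins $|B^{*}|\le 2$, and hence $|B^{*}|+|R^{*}|\le 4<n+1$. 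The small cases $|B^{*}|\le 1$ (and $|R^{*}|\le 1$) are handled via $R^{*}\subseteq(\mathrm{row}(b_0)\cup\mathrm{col}(b_0))\setminus\{b_0\}$, which has only $n-1$ cells, again contradicting $|B^{*}|+|R^{*}|\ge n+1$.

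The hard part will be the bookkeeping in this case analysis. The inequalities close comfortably because Claim~\ref{claim:4} supplies $p+q=n+1$ with $|p-q|\le 2$, hence $\max(p,q)\le (n+3)/2$, and the hypothesis $n\ge 7$ of Theorem~\ref{main:thm} is what guarantees both $pq\ge 2n$ (so that the lower bound $|B^{*}|+|R^{*}|\ge n+1$ is meaningful) and $4<n+1$ (closing the non-star case).
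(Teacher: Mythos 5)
Your proposal is correct and is essentially the paper's own argument: both reduce Claim~\ref{claim:5} to a size-$n$ matching in the bipartite graph joining blue and red cells of $R$ that are in different rows and different columns, and both verify its existence via the K\"onig--Hall duality with the same kind of geometric case analysis (cells collinear versus in general position, improper sets in $R$ having at most $(p-1)+(q-1)=n-1$ cells), using $p+q=n+1$, $\max(p,q)\le\frac{n+3}{2}$ and $n\ge 7$. The only detail to add is the degenerate subcase $B^{*}=\emptyset$ or $R^{*}=\emptyset$, which your stated small-case containment does not literally cover but which is immediate: the assumed cover would then contain every cell of one color in $R$, hence at least $n$ vertices by Claim~\ref{claim:4}, contradicting $|C|\le n-1$.
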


\begin{proof}[Proof of Claim~\ref{claim:5}]
\renewcommand{\qedsymbol}{}
Let $X$ be a set of $n$ blue cells in $R$ and let $Y$ be a set of $n$ red cells in $R$.  We form a bipartite graph $G$ whose sides are the sets $X$ and $Y$ and we draw an edge between two vertices if and only if the corresponding cells are neither in the same row nor in the same column. We apply Hall's theorem to show that there is a perfect matching in $G$. Let $S\subset X$ such that $|S|=k$.
Assume first that all the cells of $S$ are in the same row or in the same column. Since the largest side of $R$ is of size at most $\frac{n+3}{2}$, then, $|N(S)|\ge n-(\frac{n+3}{2}-k)$. If we assume, for contradiction, that $|N(S)|<k$, then we get $k> n-(\frac{n+3}{2}-k)$ which leads to $n<3$. Thus, we may assume that the cells of $S$ are not all in the same row or in the same column. Note that in this case, $|N(S)| = n$ unless the cells of $S$ form an improper set, in which case $|N(S)| = n-1$. But, the size of an improper set in $R$ is at most $(p-1)+(q-1)=n-1$. Thus, Hall's condition holds and the desired pairing exists.
\end{proof}

We can now conclude the proof of Theorem~\ref{main:thm}.
Let $R$ be a sub-rectangle of $M$ as in Claim~\ref{claim:4}. We fill $n$ blue cells with $1,\ldots n$ and $n$ red cells with $1,\ldots n$, as in Claim~\ref{claim:5}. Since for each cell in $R$ there are $(p-1)+(q-1)=n-1$ other cells in $R$ which are in the same row or in the same column and there are $n$ symbols, all the cells in $R$ can be filled to yield a partial Latin square $L$. We have $p+q-n=1$ and each symbol appears at least twice in $R$. By Theorem~\ref{thm:rys}, $L$ can be completed to a Latin square $L'$. Since each of the symbols $1,\ldots,n$ appears in a blue cell and in a red cell of $R$, all the symbol diagonals of $L'$ are balanced. This completes the proof of Theorem~\ref{main:thm}.

\end{proof}

Since any set of $2n-1$ cells contains a proper subset of size $n$ we have the following corollary:

\begin{cor}\label{cor3}
Let $M$ be a 2-colored $n\times n$ array with $n\ge7$. If each color appears in at least $2n-1$ cells, then $M$ can be partitioned into $n$ balanced diagonals.
\end{cor}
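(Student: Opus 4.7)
The plan is to deduce the corollary directly from Theorem~\ref{main:thm}. Since that theorem gives an if-and-only-if characterization in terms of each color admitting a proper subset of $n$ cells, it suffices to show that under the hypothesis of the corollary, both the red cells and the blue cells contain such a proper subset.

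The essential combinatorial ingredient (already flagged in the sentence preceding Observation~\ref{cor2}) is the following: in an $n \times n$ array, any set $S$ with $|S| \ge 2n-1$ contains a proper subset of size exactly $n$. I would prove this in two short steps. First, any improper set is contained in $(\mathrm{row}\,i\cup\mathrm{col}\,j)\setminus\{(i,j)\}$, a set of size $(n-1)+(n-1)=2n-2$, so $S$ itself is automatically proper. Second, using properness of $S$ one picks two cells $a,b\in S$ in different rows and different columns, and then greedily enlarges $\{a,b\}$ to an $n$-subset $T\subseteq S$, at each step choosing an element of $S$ that avoids any of the finitely many row-column crosses whose use would render $T$ improper; the slack $|S|-(2n-2)\ge 1$ is what guarantees such an element can always be found.

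Applying this lemma to each of the two color classes (each of size at least $2n-1$) produces the proper $n$-subsets required by Theorem~\ref{main:thm}, which then yields the partition of $M$ into $n$ balanced diagonals. I do not anticipate any real obstacle here: once the ``$2n-1$ cells imply a proper $n$-subset'' lemma is in hand, the corollary is a one-line deduction. The only mildly delicate point is the greedy extraction in the second half of the lemma, but with $n\ge 7$ the counting is comfortably loose and the argument goes through without incident.
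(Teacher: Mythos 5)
Your proposal is correct and takes essentially the same route as the paper, which deduces the corollary from Theorem~\ref{main:thm} via precisely the observation (stated there without proof) that any set of $2n-1$ cells contains a proper subset of size $n$. One caution on your greedy step: since improperness is inherited by subsets, the only threats to a superset of $\{a,b\}$ are the at most two punctured crosses through $a$ and $b$, each of size $2n-2$, so you should kill them one at a time using $|S|\ge 2n-1$ (a single new cell avoiding all threatening crosses simultaneously need not exist, e.g.\ when $S$ lies in two rows), after which arbitrary cells of $S$ can be added to reach size $n$.
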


The results in this paper originated from questions on edge colorings of the complete bipartite graph $K_{n,n}$. Thus, we formulate Corollary~\ref{cor3} in these terms.

\begin{defn}
Let $f:E(K_{n,n})\rightarrow \{1,2\}$ be a coloring. A matching in $P\subset E(K_{n,n})$ is called \emph{balanced} if $f^{-1}(i)\cap P\ne\emptyset$ for $i=1,2$.
\end{defn}

\begin{thm}\label{thm2}
Let $n\ge 7$ and let $f:E(K_{n,n})\rightarrow \{1,2\}$ be a coloring. If $f^{-1}(i)\ge 2n-1$ for $i=1,2$, then there exists a partition of $E(K_{n,n})$ into $n$ disjoint balanced matchings.
\end{thm}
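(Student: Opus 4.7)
The plan is to reduce Theorem~\ref{thm2} directly to Corollary~\ref{cor3} via the standard bijection between $E(K_{n,n})$ and the cells of an $n\times n$ array. Label the two sides of $K_{n,n}$ as $\{u_1,\dots,u_n\}$ and $\{v_1,\dots,v_n\}$, and identify the edge $u_iv_j$ with the cell $(i,j)$ of an $n\times n$ array $M$. The edge coloring $f$ transfers to a $2$-coloring of $M$, and the hypothesis $|f^{-1}(i)|\ge 2n-1$ becomes precisely the statement that each of the two colors appears in at least $2n-1$ cells of $M$.

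The next step is to verify the matching--diagonal dictionary. Under the bijection, a subset of edges is a perfect matching of $K_{n,n}$ if and only if the corresponding set of cells has exactly one cell in each row and each column, i.e.\ is a diagonal of $M$; and such a matching is balanced (in the sense of the definition preceding the theorem) if and only if the corresponding diagonal is balanced. Since $|E(K_{n,n})|=n^2$ and every matching in $K_{n,n}$ has at most $n$ edges, any partition of $E(K_{n,n})$ into $n$ matchings must in fact consist of $n$ perfect matchings, corresponding to a partition of $M$ into $n$ diagonals.

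With this dictionary in hand, the conclusion of Corollary~\ref{cor3} applied to $M$ yields a partition of $M$ into $n$ balanced diagonals, and pulling back through the bijection produces the desired partition of $E(K_{n,n})$ into $n$ disjoint balanced (perfect) matchings. There is no genuine obstacle here: the entire content of Theorem~\ref{thm2} already resides in Corollary~\ref{cor3} (and ultimately in Theorem~\ref{main:thm}), so the only work is to make the translation between the matrix and graph formulations explicit, which is a routine unpacking of definitions.
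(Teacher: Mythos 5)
Your proposal is correct and is essentially the paper's (implicit) argument: the paper offers Theorem~\ref{thm2} as a direct reformulation of Corollary~\ref{cor3} under the standard edge--cell bijection, which is exactly the translation you carry out. Your remark that a partition into $n$ matchings forces each to be perfect is a nice explicit touch, but otherwise there is no difference in substance.
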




\begin{thebibliography}{10}

\bibitem{alon1997degree}
N.~Alon and J.~H. Kim, \emph{On the degree, size, and chromatic index of a
  uniform hypergraph}, Journal of Combinatorial Theory, Series A \textbf{77}
  (1997), no.~1, 165--170.

\bibitem{Alon1995covering}
N.~Alon, J.~Spencer, and P.~Tetali, \emph{Covering with latin transversals},
  Discrete applied mathematics \textbf{57} (1995), no.~1, 1--10.

\bibitem{andersen1983thank}
L.~D. Andersen and A.~J.~W. Hilton, \emph{Thank evans!}, Proceedings of the
  London Mathematical Society \textbf{3} (1983), no.~3, 507--522.

\bibitem{BruRys}
R.~A. Brualdi and H.~J. Ryser, \emph{Combinatorial matrix theory}, Cambridge
  University Press, 1991.

\bibitem{evans1960embedding}
T.~Evans, \emph{Embedding incomplete latin squares}, American Mathematical
  Monthly (1960), 958--961.

\bibitem{haggkvist2008orthogonal}
R.~H{\"a}ggkvist, , and A.~Johansson, \emph{Orthogonal {L}atin rectangles},
  Combinatorics, Probability and Computing \textbf{17} (2008), no.~04,
  519--536.

\bibitem{Hil94}
A.~J.~W. Hilton, \emph{Problem bcc 13.20}, Discrete Math. \textbf{125} (1994),
  407--417.

\bibitem{pip89}
N.~Pippenger and J.~Spencer, \emph{Asymptotic behavior of the chromatic index
  for hypergraphs}, Journal of Combinatorial Theory, Series A \textbf{51}
  (1989), 24--42.

\bibitem{ryser51}
H.~J. Ryser, \emph{A combinatorial theorem with an application to {L}atin
  rectangles}, Proceedings of the American Mathematical Society \textbf{2}
  (1951), no.~4, 550--552.

\bibitem{Ryser67}
H.~J. Ryser, \emph{Neuere probleme der kombinatorik}, Vortr{\"a}ge {\"u}ber
  Kombinatorik, Oberwolfach, Matematisches Forschungsinstitute (Oberwolfach,
  Germany), July 1967, pp.~69--91.

\bibitem{smet81}
B.~Smetaniuk, \emph{A new construction on {L}atin squares i: A proof of the
  evans conjecture}, Ars Combin \textbf{11} (1981), 155--172.

\bibitem{stein75}
S.~K. Stein, \emph{Transversals of {L}atin squares and their generalizations},
  Pacific J. Math \textbf{59} (1975), no.~2, 567--575.

\end{thebibliography}
\end{document}